\newcommand{\ph}{\varphi}
\newcommand{\Q}{\mathbb{Q}}
\newcommand{\Z}{\mathbb{Z}}
\renewcommand{\phi}{\varphi}
\newcommand{\N}{\mathbb{N}}
\newcommand{\sur}{\twoheadrightarrow}
\DeclareMathOperator{\im}{im}
\renewcommand{\hat}{\widehat}
\newtheorem{theorem}{Theorem}[section]
\newtheorem{def-prop}[theorem]{Definition-Proposition}
\newtheorem{prop}[theorem]{Proposition}
\newtheorem{conj}[theorem]{Conjecture}
\newtheorem{lemma}[theorem]{Lemma}
\theoremstyle{definition}
\newtheorem{ex}[theorem]{Example}
\theoremstyle{remark}
\newtheorem{remark}{Remark}
\begin{document}

\title{Path counting and rank gaps in differential posets}
\author{Christian Gaetz}
\address{Department of Mathematics, Massachusetts Institute of Technology, Cambridge, MA.}
\email{\href{mailto:gaetz@mit.edu}{gaetz@mit.edu}} 
\thanks{C.G. was supported by the National Science Foundation Graduate Research Fellowship under Grant No. 1122374.}

\author{Praveen Venkataramana}
\address{Division of Physics, Mathematics, and Astronomy, California Institute of Technology, Pasadena, CA.}
\email{\href{mailto:pvenkata@caltech.edu}{pvenkata@caltech.edu}}
\date{\today}

\begin{abstract}
We study the gaps $\Delta p_n$ between consecutive rank sizes in $r$-differential posets by introducing a projection operator whose matrix entries can be expressed in terms of the number of certain paths in the Hasse diagram.  We strengthen Miller's result that $\Delta p_n \geq 1$, which resolved a longstanding conjecture of Stanley, by showing that $\Delta p_n \geq 2r$.  We also obtain stronger bounds in the case that the poset has many substructures called \textit{threads}. 
\end{abstract}

\keywords{Differential poset; rank growth; thread element; Hasse diagram; path counting}

\maketitle

\section{Introduction} \label{sec:intro}

Differential posets are a class of partially ordered sets, originally defined by Stanley \cite{Stanley1988}, which generalize many of the enumerative and combinatorial properties of Young's lattice $Y$, the lattice of integer partitions.  We refer the reader to \cite{Stanley2012} for definitions and basic facts about posets.

If $P$ is a graded poset and $S \subseteq P$, let $\Q S$ denote the $\Q$-vector space with basis $S$.  Let $P_n$ denote the $n$-th rank of $P$ and define the \textit{up and down operators} $U_n: \Q P_n \to \Q P_{n+1}$ and $D_n: \Q P_n \to \Q P_{n-1}$ by
\begin{align*}
U_n x &= \sum_{x \lessdot y} y \\
D_n x &= \sum_{z \lessdot x} z
\end{align*}
where $x \in P_n$ and $\lessdot$ denotes the covering relation in $P$.  We often omit the subscripts and write $U,D$ when no confusion can result.  For $r \in \Z_{>0}$ a locally-finite $\N$-graded poset $P$ with $\hat{0}$ is an \textit{r-differential poset} if, for all $n \geq 0$ we have
\begin{align}
D_{n+1}U_n - U_{n-1}D_n = r \cdot I 
\label{eq:DU-UD}
\end{align}
as linear operators.

\begin{ex} \label{ex:Y-and-Z}
Young's lattice $Y$ is a 1-differential poset (see Figure \ref{fig:Y}).  More generally the product poset $Y^r$ is an $r$-differential poset.  

Another family of examples are the Fibonacci lattices $Z(r)$ (see Figure \ref{fig:Z2}).  They are produced by the following iterative procedure:
\begin{enumerate}
\item Having defined a differential poset $P$ up to rank $n$, add an element $\hat{x}$ to $P_{n+1}$ for each element $x \in P_{n-1}$ so that $\hat{x}$ covers exactly those $y \in P_n$ which cover $x$.  
\item For each $y \in P_n$, add an additional $r$ singletons covering $y$.
\end{enumerate}
The $r$-differential poset $Z(r)$ is obtained by iterating this procedure beginning with a single point \cite{Stanley1988}.
\end{ex}

\begin{center}
\begin{figure} 
\begin{tikzpicture} [every node/.style={draw,shape=circle,fill=black,scale=0.4}, scale=1.4]
\node(a0) at (0,0) {};
\node[fill=white](b0) at (-0.7,1) {};
\node[fill=white](b1) at (0.7,1) {};
\node(c0) at (0,2) {};
\node[fill=white](c1) at (-1.4,2) {};
\node[fill=white](c2) at (-.7,2) {};
\node[fill=white](c3) at (.7,2) {};
\node[fill=white](c4) at (1.4,2) {};
\node(d0) at (-0.7,3) {};
\node(d1) at (0.7,3) {};

\node[fill=white](d2) at (-1.4,3) {};
\node[fill=white](d3) at (-2.1,3) {};
\node[fill=white](d4) at (-1,3) {};
\node[fill=white](d5) at (-.4,3) {};
\node(d6) at (-.2,3) {};
\node(d7) at (.2,3) {};
\node[fill=white](d8) at (1,3) {};
\node[fill=white](d9) at (.4,3) {};
\node[fill=white](d10) at (1.4,3) {};
\node[fill=white](d11) at (2.1,3) {};

\draw (a0)--(b0);
\draw (a0)--(b1);
\draw (b0)--(c0);
\draw (b1)--(c0);
\draw (b0)--(c1);
\draw (b0)--(c2);
\draw (b1)--(c3);
\draw (b1)--(c4);
\draw (c0)--(d0);
\draw (c1)--(d0);
\draw (c2)--(d0);
\draw (c0)--(d1);
\draw (c3)--(d1);
\draw (c4)--(d1);

\draw (c1)--(d2);
\draw (c1)--(d3);

\draw (c2)--(d4);
\draw (c2)--(d5);

\draw (c3)--(d8);
\draw (c3)--(d9);

\draw (c4)--(d10);
\draw (c4)--(d11);

\draw (c0)--(d6);
\draw (c0)--(d7);
\end{tikzpicture}
\caption{The Hasse diagram of the Fibonacci lattice $Z(2)$, a 2-differential poset, up to rank 3.  The thread elements (see Section \ref{sec:threads}) are unfilled.}
\label{fig:Z2}
\end{figure}
\end{center}

\begin{center}
\begin{figure} 
\begin{tikzpicture} [every node/.style={draw,shape=circle,fill=black,scale=0.4}, scale=1.4]
\node(zero) at (0,-1) {};
\node[fill=white](a0) at (0,0) {};
\node[fill=white](b0) at (-0.7,1) {};
\node[fill=white](b1) at (0.7,1) {};
\node(c0) at (0,2) {};
\node[fill=white](c1) at (-1.4,2) {};
\node[fill=white](c4) at (1.4,2) {};
\node(d0) at (-0.7,3) {};
\node(d1) at (0.7,3) {};

\node(d2) at (-.7,3) {};
\node[fill=white](d3) at (-2.1,3) {};
\node(d4) at (0,3) {};
\node(d10) at (.7,3) {};
\node[fill=white](d11) at (2.1,3) {};

\node[fill=white](e0) at (-2.8,4) {};
\node(e1) at (-1.4,4) {};
\node(e2) at (-.7,4) {};
\node(e3) at (0,4) {};
\node(e4) at (.7,4) {};
\node(e5) at (1.4,4) {};
\node[fill=white](e6) at (2.8,4) {};

\draw (d0)--(e2);
\draw (d0)--(e3);
\draw (d0)--(e1);
\draw (d4)--(e2);
\draw (d4)--(e4);
\draw (d1)--(e3);
\draw (d1)--(e4);
\draw (d1)--(e5);
\draw (d3)--(e0);
\draw (d3)--(e1);
\draw (d11)--(e5);
\draw (d11)--(e6);

\draw (zero)--(a0);
\draw (a0)--(b0);
\draw (a0)--(b1);
\draw (b0)--(c0);
\draw (b1)--(c0);
\draw (b0)--(c1);
\draw (b1)--(c4);
\draw (c0)--(d0);
\draw (c1)--(d0);
\draw (c0)--(d1);
\draw (c0)--(d4);
\draw (c4)--(d1);
\draw (c1)--(d2);
\draw (c1)--(d3);
\draw (c4)--(d10);
\draw (c4)--(d11);
\end{tikzpicture}
\caption{The Hasse diagram of Young's lattice $Y$, a 1-differential poset, up to rank 5.  The thread elements (see Section \ref{sec:threads}) are unfilled.}
\label{fig:Y}
\end{figure}
\end{center}

One of the most basic properties of differential posets that one might hope to study are the possible \textit{rank functions} $p_n=|P_n|$, not least because the rank functions for the examples $Y$ and $Z(1)$ from Example \ref{ex:Y-and-Z} are the integer partition numbers and Fibonacci numbers respectively, two sequences of integers which have received an immense amount of study.  The first general result obtained about $p_n$ was the following:  

\begin{prop}[Stanley \cite{Stanley1988}] \label{prop:weak-growth}
For any differential poset $P$ the rank sizes $p_0,p_1,p_2,...$ weakly increase.
\end{prop}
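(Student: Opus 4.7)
The plan is to show the stronger statement that each up operator $U_n : \q{Q}P_n \to \q{Q}P_{n+1}$ is injective; this immediately gives $p_{n+1} \geq p_n$ by comparing dimensions of source and image.

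To show injectivity of $U_n$, I would study the composite $D_{n+1} U_n$ on $\q{Q}P_n$. The differential relation (\ref{eq:DU-UD}) rearranges to
\[
D_{n+1} U_n \;=\; U_{n-1} D_n + r\cdot I,
\]
so it suffices to argue that the right-hand side is invertible. Here I would equip each $\q{Q}P_k$ with the inner product making the poset elements an orthonormal basis. From the definitions of $U$ and $D$, for $z \in P_{n-1}$ and $x \in P_n$ one has $\langle U_{n-1} z, x\rangle = \#\{z \lessdot x\} = \langle z, D_n x\rangle$, so $D_n$ is the adjoint of $U_{n-1}$. Consequently $U_{n-1} D_n = U_{n-1} U_{n-1}^*$ is positive semidefinite, and adding $r \cdot I$ with $r > 0$ yields an operator whose eigenvalues are all at least $r$, hence invertible.

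Invertibility of $D_{n+1} U_n$ forces $U_n$ to be injective, so $p_n = \dim \q{Q}P_n \leq \dim \q{Q}P_{n+1} = p_{n+1}$. For the base case $n = 0$, one interprets $U_{-1}$ and $D_0$ as zero (since $P$ has a $\hat 0$ and no elements of negative rank), so the identity reduces to $D_1 U_0 = r\cdot I$, which is again invertible.

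The only subtlety I anticipate is the boundary behavior at $n=0$ and ensuring that the adjointness statement is stated with care, since $U$ and $D$ act between different graded pieces; aside from this the argument is essentially a one-line consequence of the commutation relation together with the self-adjointness setup on $\q{Q}P$.
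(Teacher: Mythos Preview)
Your argument is correct. The paper does not actually supply its own proof of this proposition; it simply attributes the result to Stanley~\cite{Stanley1988}. Your route---showing $U_{n-1}D_n = U_{n-1}U_{n-1}^*$ is positive semidefinite, hence $D_{n+1}U_n = U_{n-1}D_n + rI$ is positive definite and invertible, forcing $U_n$ to be injective---is precisely the standard argument, and in fact simultaneously establishes Proposition~\ref{prop:U-injective}(a) and (b), which the paper likewise quotes without proof. The boundary case $n=0$ is handled correctly.
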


Two main lines of research about the rank function have developed, both motivated by the intuition that $Y^r$ should be the ``smallest" $r$-differential poset, and $Z(r)$ the ``largest".

The first concerns the asymptotic behavior of $p_n$.  Stanley and Zanello \cite{Stanley2012a} have shown that in an $r$-differential poset,
\[
p_n \gg n^a \cdot e^{\sqrt{2rn}}
\]
a bound which is fairly close to the well-known asymptotics for $p_r(n)=|(Y^r)_n|$:
\[
p_r(n) \gg c \cdot n^b \cdot e^{\pi \sqrt{2rn/3}}
\]
where $a,b,c$ are constants.  On the other hand, Byrnes (\cite{Byrnes2012}, Theorem 1.2) has shown that for any $r$-differential poset
\begin{equation*} 
p_n \leq |Z(r)_n|.  
\end{equation*}
From the recursive definition of $Z(r)$ in Example \ref{ex:Y-and-Z}
it is immediate that $|Z(r)|_n=r \cdot |Z(r)|_{n-1}+|Z(r)|_{n-2}$.  Standard techniques allow us to solve this recurrence and observe that 
\begin{align} 
|Z(r)|_n &= \frac{r+\sqrt{r^2+4}}{2\sqrt{r^2+4}}\cdot \ph_r^n+\frac{\sqrt{r^2+4}-r}{2\sqrt{r^2+4}}\cdot \psi_r^n \nonumber \\
&\leq 2 \cdot \ph_r^n \label{eq:pn-upper-bound}
\end{align}
where $\ph_r=(r+\sqrt{r^2+4})/2$ and $\psi_r=(r-\sqrt{r^2+4})/2$.

The second, more difficult, line of research has sought explicit bounds on the \textit{rank gaps} $\Delta p_n = p_n-p_{n-1}$, which, by Proposition \ref{prop:weak-growth} are nonnegative. These rank gaps are the eigenvalue multiplicities of $DU_n$ (see \cite{Stanley1988}) and also appear in the study of the finer algebraic structure of differential posets (see \cite{Agarwal2017, Miller2009}).  It was conjectured in 1988 in Stanley's original paper \cite{Stanley1988} that, except when $r=n=1$, the rank gaps $\Delta p_n$ are strictly positive.  This conjecture was proven by Miller only as recently as 2013, who showed:

\begin{theorem}[Miller \cite{Miller2013}] \label{thm:strict-growth}
For any $r$-differential poset, $\Delta p_n \geq 1$, unless $r=n=1$.
\end{theorem}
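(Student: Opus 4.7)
The plan is to use the Heisenberg relation \eqref{eq:DU-UD} to construct an explicit projection operator onto $\ker D_n$, expressed via the up and down operators, and then to bound its rank from below by exhibiting a nonzero matrix entry in terms of path counts on the Hasse diagram.

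I would first observe that $\dim \ker D_n = \Delta p_n$. With respect to the basis $P_n$, we have $D_{n+1} = U_n^T$, so $\ker D_n$ is the orthogonal complement of $\im U_{n-1}$. The identity $U_{n-1}^T U_{n-1} = D_n U_{n-1} = U_{n-2} D_{n-1} + rI$ exhibits $U_{n-1}^T U_{n-1}$ as the sum of the positive semi-definite matrix $U_{n-2} U_{n-2}^T$ and $rI$, so it is positive-definite. In particular $U_{n-1}$ is injective, and thus $\dim \ker D_n = p_n - p_{n-1} = \Delta p_n$; the theorem reduces to showing $\ker D_n \neq 0$ (away from the exceptional case).

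Next I would define
\[
\pi_n \;=\; \sum_{k=0}^{n} \frac{(-1)^k}{k!\,r^k}\,U^k D^k \;:\; \Q P_n \to \Q P_n.
\]
Using the commutation identity $D^k U = UD^k + krD^{k-1}$ (a routine induction from \eqref{eq:DU-UD}), one verifies $D_n \pi_n = 0$ and that $\pi_n$ fixes $\ker D_n$ pointwise, so $\pi_n$ is a projection onto $\ker D_n$ with $\rank(\pi_n) = \Delta p_n$. Its matrix entries admit a combinatorial interpretation:
\[
\langle y \mid \pi_n \mid x\rangle \;=\; \sum_{k=0}^{n} \frac{(-1)^k}{k!\,r^k}\sum_{z \in P_{n-k}} e(z,x)\,e(z,y),
\]
where $e(z,w)$ denotes the number of saturated chains from $z$ up to $w$; that is, each entry is a signed count of pairs of chains descending from $x$ and $y$ to a common meeting point $z$.

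The final and most difficult step is to show that $\pi_n \neq 0$. The alternating signs preclude naive positivity arguments, so some structural input is needed. My approach would be to fix a well-chosen $x \in P_n$ (say, one maximizing the number of maximal chains from $\hat 0$) and organize the signed sum $\langle x \mid \pi_n \mid x\rangle$ by meeting height $n-k$; the $k=n$ term contributes a single value $(-1)^n e(\hat 0, x)^2/(n!\,r^n)$, and a careful pairing or inductive cancellation argument should show that this leading contribution is not fully cancelled by the lower-$k$ terms. The exceptional case $r=n=1$ is verified directly: applying \eqref{eq:DU-UD} to $\hat 0$ yields $p_1 = r$, so $\Delta p_1 = r-1$ vanishes precisely when $r=1$.
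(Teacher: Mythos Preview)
Your operator $\pi_n$ is exactly $I-M_n$ in the paper's notation (Proposition~\ref{prop:M-is-projection}), so your setup---projection onto $\ker D_n$, matrix entries as alternating signed path counts---matches the paper's framework precisely. Note, incidentally, that Theorem~\ref{thm:strict-growth} is quoted from Miller and not re-proved here; but the paper's own machinery, specialized to a single element, does yield it.

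The genuine gap is your final step. You propose to pick $x$ maximizing $e(\hat 0,x)$ and argue that the $k=n$ contribution $(-1)^n e(x)^2/(n!\,r^n)$ survives cancellation. This is exactly the kind of estimate the paper warns is ``very difficult'' for generic $x$: individual terms in the alternating sum can be far larger than~$1$, and there is no evident pairing that tames them. Worse, your heuristic points in the wrong direction. The element that makes the diagonal entry tractable is not one with \emph{many} chains but one with as \emph{few} as possible---a \emph{thread element} $\hat t$ (a singleton covering a singleton; see Section~\ref{sec:threads}). For such $\hat t$ one has $D^k\hat t$ equal to a single basis vector for every $k$, so $\langle D^k\hat t,D^k\hat t\rangle=1$ and
\[
\langle \hat t\mid \pi_n\mid \hat t\rangle \;=\; \sum_{k=0}^{n}\frac{(-1)^k}{k!\,r^k},
\]
an alternating series with strictly decreasing terms, hence positive for $n\ge 2$ (and for $n=1$ it equals $1-\tfrac{1}{r}$, recovering the exceptional case $r=n=1$). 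That thread elements exist in every rank $\ge 2$ is the content of Propositions~\ref{prop:threads-extend} and~\ref{prop:2r-threads}. This structural input---not a cancellation argument at a chain-rich vertex---is the missing idea.
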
  

Despite the fact that Stanley's conjecture was open for so long, it seems that much stronger results may be true.  For example, Stanley and Zanello \cite{Stanley2012a} have asked whether \emph{all} iterated partial differences $\Delta^t p_n$ are eventually positive for large $n$, and whether this is already true for $n=2$ in the case $t=r$.  One implication of a positive answer to this question would be that
\begin{equation} 
\lim_{r \to \infty} \min_{\substack{P \text{ $r$-differential} \\ n\geq 2}} \Delta p_n = \infty \label{eq:r-limit}
\end{equation}
and that 
\begin{equation}
\lim_{n \to \infty} \Delta p_n = \infty \label{eq:n-limit}
\end{equation}
for any differential poset $P$.

Our main theorem is the first improvement on Miller's bound for general differential posets, and resolves (\ref{eq:r-limit}).  In addition, it seems plausible that the new methods we use to prove this result might lead to a resolution of (\ref{eq:n-limit}), see Remark \ref{rem:strategy}.

\begin{theorem} \label{thm:main-theorem}
For any $r$-differential poset, $\Delta p_n \geq 2r$ for $n \geq 4$ if $r=1$, $n \geq 3$ if $r=2$ and for $n \geq 2$ if $r>2$.
\end{theorem}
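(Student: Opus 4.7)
The plan is to introduce an explicit formula for the orthogonal projection $\pi_n : \Q P_n \to \Q P_n$ with image $\ker D_n$, interpret its matrix entries as alternating sums of path counts in the Hasse diagram of $P$, and then exploit $\Delta p_n = \rank \pi_n = \operatorname{tr}(\pi_n)$ together with combinatorial constraints extracted from (\ref{eq:DU-UD}) to obtain the lower bound.

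Equation (\ref{eq:DU-UD}) is the Heisenberg/Weyl commutation relation $[D,U] = rI$, and in the natural inner product on $\Q P_n$ making $P_n$ orthonormal one has $U = D^*$, so the decomposition $\Q P_n = \ker D_n \oplus U_{n-1}\Q P_{n-1}$ is orthogonal. By analogy with the Fock-space vacuum projector, one is led to
\begin{equation*}
\pi_n \;=\; \sum_{k=0}^{n} \frac{(-1)^k}{k!\, r^k}\, U_{n-k}^{\,k}\, D_n^{\,k}.
\end{equation*}
Via the iterated commutation identity $D^k U = U D^k + kr D^{k-1}$, I would verify by a two-line telescoping that both $D_n \pi_n = 0$ and $\pi_n U_{n-1} = 0$, showing $\pi_n$ is the desired orthogonal projection. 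Since $(U^k D^k)_{x,y}$ equals the number of \emph{V-shaped} Hasse paths of length $2k$ from $y$ down to some element of $P_{n-k}$ and back up to $x$, this realizes $(\pi_n)_{x,y}$ as an explicit alternating sum of path counts, as promised in the abstract.

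Because $\pi_n$ is an orthogonal projection of rank $\Delta p_n$, taking the trace gives an identity for $\Delta p_n$. In the base case $n=2$ only $k=0,1,2$ contribute, and using $D_2 U_1 = rI + U_0 D_1$ on $\Q P_1$ this becomes
\begin{equation*}
\Delta p_2 \;=\; p_2 \;-\; \frac{1}{r}\sum_{x \in P_2} k_x \;+\; \frac{1}{2r^2}\sum_{x \in P_2} k_x^2,
\end{equation*}
where $k_x$ is the number of lower covers of $x \in P_2$. The identities $\sum_x k_x = r(r+1)$ and $\sum_x k_x^2 = 2r^2$ (the latter obtained by summing all entries of $rI + U_0 D_1$) hold in every $r$-differential poset, so minimizing $p_2$ as a positive integer multiset $(k_x)$ subject to these two linear constraints is a small integer LP, whose minimum is attained on multisets supported on $\{1,2\}$. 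This gives $p_2 \geq r(r+3)/2$ and hence $\Delta p_2 \geq r(r+1)/2 \geq 2r$ for $r \geq 3$, settling the $n=2$ case of the theorem. For $n \geq 3$ I would induct on $n$, expanding each $T_{n,k} = \operatorname{tr}_{\Q P_n}(U^k D^k)$ via the cyclic identity $T_{n,k} = \operatorname{tr}_{\Q P_{n-k}}(D^k U^k)$ and the normal-ordered expansion
\begin{equation*}
D^k U^k \;=\; \sum_{i=0}^{k} \binom{k}{i}^{\!2} i!\, r^i\, U^{k-i} D^{k-i},
\end{equation*}
to reduce the trace identity to linear constraints among the rank sizes $p_0, \ldots, p_{n-1}$; Theorem \ref{thm:strict-growth} supplies the needed slack at the low end.

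The main obstacle is controlling the alternating-sign cancellation in the trace formula: the raw counts $T_{n,k}$ grow rapidly in $k$, and only cancellation keeps the total bounded, so no crude estimate on individual terms suffices. I expect the cleanest route is to avoid the full trace and instead evaluate quadratic forms $v^{T}\pi_n v$ for carefully chosen test vectors $v \in \Q P_n$---for instance differences $x - x'$ of pairs in $P_n$ sharing a common lower-cover set, or ``thread-like'' elements whose down-sets are concentrated---chosen so that the alternating sum truncates at low $k$ and admits a direct combinatorial lower bound. Exhibiting $2r$ such test vectors with linearly independent images under $\pi_n$ would give $\Delta p_n \geq 2r$, and the exceptional thresholds $n \geq 2, 3, 4$ for $r > 2,\, r = 2,\, r = 1$ should emerge naturally as the ranks at which a supply of $2r$ such test vectors is first guaranteed to exist.
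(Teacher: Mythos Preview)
Your projection $\pi_n$ is exactly the complement $I - M_n$ of the paper's projection, and your entry formula matches Proposition~\ref{prop:M-is-projection}(b); the setup is correct.

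The trace strategy, however, is a dead end. Because $\pi_n$ is an orthogonal projection onto $\ker D_n$, its trace equals its rank, which is $p_n - p_{n-1}$ on the nose. Expanding $\operatorname{tr}(\pi_n)$ via your explicit formula and simplifying with the normal-ordering identities will always collapse to exactly $p_n - p_{n-1}$; there are no residual ``linear constraints among $p_0,\ldots,p_{n-1}$'' to exploit and no slack for Theorem~\ref{thm:strict-growth} to fill. Concretely, in your $n=2$ display, substituting $\sum_x k_x = r(r+1)$ and $\sum_x k_x^2 = 2r^2$ gives $\Delta p_2 = p_2 - (r+1) + 1 = p_2 - r$, which is the definition. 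Your actual bound $p_2 \geq r(r+3)/2$ comes not from the trace but from a separate degree-sequence optimization: minimize the size of a multiset of positive integers with prescribed sum and sum of squares. That argument is correct and yields $\Delta p_2 \geq r(r+1)/2 \geq 2r$ for $r \geq 3$, but it is specific to rank~$2$ and does not propagate by the induction you sketch, since for $n\geq 3$ the analogous moment identities again collapse to tautologies.

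Your closing paragraph points in the right direction and is essentially what the paper does. Working with $M_n$ rather than $\pi_n$, the paper shows (Lemma~\ref{lem:submatrix}) that if a principal submatrix $M_S$ has no eigenvalue equal to~$1$ then $\Delta p_n \geq |S|$. The concrete ingredients you are missing are: (i) the combinatorial fact that every element of $P_1$ is covered by at least two thread elements, so there are $2r$ threads starting at rank~$2$ (Proposition~\ref{prop:2r-threads}); (ii) extending these threads to rank~$n$, where the thread condition makes $D^k \hat t$ a single basis vector for $k \leq n-2$, so the entries of $M_{\hat T}$ are computable exactly rather than merely estimable; and (iii) a Gershgorin row-sum bound on the resulting $2r \times 2r$ matrix. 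The thresholds $n \geq 4,\,3,\,2$ for $r = 1,\,2,\,{\geq}3$ are precisely the ranks at which this row sum first drops below~$1$.
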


In Theorem \ref{thm:gaps-from-threads} we also prove a stronger result in the case where $P$ has many \textit{threads} (see Section \ref{sec:threads}).

\section{Projection matrices and path counting}
Let $\langle, \rangle$ denote the inner product on $\Q P$ which makes the elements of $P$ into an orthonormal basis.  It is easy to see that the operators $U,D$ are adjoint with respect to this inner product:
\[
\langle Ux,y \rangle = \langle x, Dy \rangle.
\]
Formulas for counting paths in the Hasse diagram of $P$ (viewed as a graph) can often be expressed in terms of the inner product.  For example $e(x):=\langle U^n \hat{0}, x \rangle$ for $x \in P_n$ is easily seen to be the number of paths in the Hasse diagram of $P$ from $\hat{0}$ to $x$ using only upward steps.  We will make use of the following enumerative identity:

\begin{prop}[Stanley \cite{Stanley1988}] \label{prop:sum-of-squares}
Let $P$ be an $r$-differential poset, then 
\[
\sum_{x \in P_n} e(x)^2 = r^n n!.
\]
\end{prop}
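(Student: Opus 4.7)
The plan is to convert the combinatorial sum into an inner product, and then reduce it to the computation of a scalar multiple of $\hat 0$ using the defining commutation relation $DU - UD = rI$.

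First, since $\{x : x \in P_n\}$ is an orthonormal basis for $\Q P_n$, the definition of $e(x)$ gives
\begin{equation*}
\sum_{x \in P_n} e(x)^2 \;=\; \sum_{x \in P_n} \langle U^n \hat 0, x\rangle^2 \;=\; \langle U^n \hat 0, U^n \hat 0\rangle.
\end{equation*}
Using the adjointness of $U$ and $D$, this equals $\langle D U^n \hat 0, U^{n-1}\hat 0\rangle$, so it will suffice to get a clean expression for $DU^n$.

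The key step is to establish the operator identity
\begin{equation*}
D U^n \;=\; U^n D + nr\, U^{n-1}
\end{equation*}
by induction on $n$. The base case $n=1$ is exactly (\ref{eq:DU-UD}). For the inductive step, assuming the identity for $n-1$, I would write $DU^n = (DU^{n-1})U = (U^{n-1}D + (n-1)rU^{n-2})U$, then apply (\ref{eq:DU-UD}) once more to $DU$ inside, which gives $U^{n-1}(UD + rI) + (n-1)rU^{n-1} = U^n D + nrU^{n-1}$, as desired.

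Applying this identity to $\hat 0$ and using $D\hat 0 = 0$ yields $DU^n \hat 0 = nr\, U^{n-1}\hat 0$. Substituting back gives the recursion
\begin{equation*}
\langle U^n \hat 0, U^n \hat 0\rangle \;=\; nr\,\langle U^{n-1}\hat 0, U^{n-1}\hat 0\rangle,
\end{equation*}
and since the $n=0$ case is $\langle \hat 0, \hat 0\rangle = 1$, iterating produces $r^n n!$, which is the claimed value. There is no real obstacle here: once the commutator identity $DU^n = U^n D + nrU^{n-1}$ is in hand, the result falls out, and this identity is a one-line induction from (\ref{eq:DU-UD}).
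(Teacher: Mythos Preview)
Your argument is correct. Note, however, that the paper does not actually supply a proof of this proposition: it is quoted from Stanley's original paper \cite{Stanley1988} and used as a black box. What you have written is precisely the standard proof (express the sum as $\langle U^n\hat 0, U^n\hat 0\rangle$, use adjointness, and iterate the commutator identity $DU^n = U^nD + nrU^{n-1}$), so there is nothing to compare---your proof simply fills in what the paper omits.
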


The following linear algebraic facts will also be used:

\begin{prop}[Stanley \cite{Stanley1988}]
\label{prop:U-injective}
Let $P$ be a differential poset and $n \geq 0$, then:
\begin{itemize}
\item[(a)] $U_n: \Q P_n \to \Q P_{n+1}$ is injective, and
\item[(b)] $DU_n$ is invertible.
\end{itemize}
\end{prop}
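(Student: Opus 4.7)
The plan is to deduce both parts from the observation that, with respect to the inner product $\langle,\rangle$, the operator $D$ is the adjoint of $U$, so $U^\ast U = DU$ is automatically positive semidefinite; the commutation relation \eqref{eq:DU-UD} then boosts this to positive definiteness.

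More precisely, I would first note that (a) follows from (b): if $DU_n$ is invertible (in particular injective), then $U_n$ must also be injective, since any $x$ with $U_n x = 0$ would also satisfy $DU_n x = 0$. So it suffices to prove (b).

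For (b), I would rewrite the defining identity \eqref{eq:DU-UD} as
\[
D_{n+1}U_n = U_{n-1}D_n + rI
\]
(interpreting the right-hand side as $rI$ when $n=0$, since $\Q P_{-1}=0$). Because $D_n = U_{n-1}^\ast$ with respect to $\langle,\rangle$, the first summand is $U_{n-1}U_{n-1}^\ast$, which is positive semidefinite: for any $v \in \Q P_n$,
\[
\langle U_{n-1}D_n v, v \rangle = \langle U_{n-1}^\ast v, U_{n-1}^\ast v \rangle = \|D_n v\|^2 \geq 0.
\]
Adding $rI$ with $r>0$ then gives $\langle DU_n v, v\rangle \geq r\|v\|^2 > 0$ for any nonzero $v$, so $DU_n$ is positive definite and hence invertible.

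There is no real obstacle here; the entire argument rests on two observations — the self-adjoint positivity of $U^\ast U$ and the positive shift coming from the $rI$ term in \eqref{eq:DU-UD} — together with the trivial reduction of (a) to (b). The only mild subtlety is the base case $n=0$, which is handled by the convention $D_0 = 0$.
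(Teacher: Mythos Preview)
Your argument is correct. The paper does not actually prove this proposition; it is quoted as a result from Stanley's original paper \cite{Stanley1988}, so there is no in-paper proof to compare against. Your approach---observing that $D_n=U_{n-1}^\ast$ so that $DU_n=U_{n-1}U_{n-1}^\ast+rI$ is positive definite---is essentially the standard proof, and the reduction of (a) to (b) is clean.
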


Our main tool will be to study the projection operator onto $\im(U)$, whose entries can be expressed in terms of path counting in the Hasse diagram of $P$.

\begin{prop} \label{prop:M-is-projection}
Let $M_n=U(DU)^{-1}D_n$, then:
\begin{itemize}
\item[(a)] $M$ is the orthogonal projection $\Q P_n \sur \im(U_{n-1})$.
\item[(b)] In the standard basis $P_n$ for $\Q P_n$, the entries for $M_n=(m_{xy})_{x,y \in P_n}$ are given by
\[
m_{xy}=\sum_{k=1}^n (-1)^{k-1} \frac{\langle D^k x, D^k y \rangle}{r^k k!}.
\]
Futhermore $\langle D^k x, D^k y \rangle$ counts the number of pairs of paths of length $k$ beginning at $x$ and $y$ and ending at a common element of $P_{n-k}$.
\end{itemize}
\end{prop}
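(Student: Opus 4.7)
The plan is to treat parts (a) and (b) separately, with (a) being a routine operator-algebra exercise and (b) reducing to the problem of writing $(DU)^{-1}$ in closed form using the commutation relation (\ref{eq:DU-UD}).

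For (a), I would verify the three defining properties of the orthogonal projection onto $\im(U_{n-1})$. Since $U$ and $D$ are mutually adjoint, $(DU)^{*} = U^{*}D^{*} = DU$ is self-adjoint, so its inverse (which exists by Proposition~\ref{prop:U-injective}(b)) is self-adjoint as well. This gives $M^{*} = D^{*}((DU)^{-1})^{*}U^{*} = U(DU)^{-1}D = M$. Idempotence is immediate by cancellation: $M^{2} = U(DU)^{-1}(DU)(DU)^{-1}D = M$. Finally, $\im(M) \subseteq \im(U)$ is obvious, and conversely $M(Uy) = U(DU)^{-1}(DU)y = Uy$, so $M$ restricts to the identity on $\im(U)$.

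For (b), the core task is producing an explicit formula for $(DU)^{-1}$ on $\Q P_{n-1}$. The essential tool is the identity $DU^{k} = U^{k}D + krU^{k-1}$, which follows from $DU = UD + rI$ by a one-line induction on $k$. This motivates the ansatz $(DU)^{-1} = \sum_{k=1}^{n} a_{k} U^{k-1} D^{k-1}$ on $\Q P_{n-1}$. Multiplying on the left by $DU$ and applying the commutation identity yields $(DU)\cdot U^{k-1}D^{k-1} = U^{k}D^{k} + kr\,U^{k-1}D^{k-1}$. With the choice $a_{k} = (-1)^{k-1}/(r^{k}k!)$, the resulting sum telescopes: contributions for $k = 1,\ldots,n-1$ cancel in pairs and leave $I$ together with a single leftover term $(-1)^{n-1} U^{n}D^{n}/(r^{n}n!)$. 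Since $D^{n}$ kills $\Q P_{n-1}$, this term vanishes and we conclude $(DU)^{-1} = \sum_{k=1}^{n} (-1)^{k-1}U^{k-1}D^{k-1}/(r^{k}k!)$. Multiplying on the left by $U$ and on the right by $D$ gives the claimed formula $M_{n} = \sum_{k=1}^{n} (-1)^{k-1} U^{k}D^{k}/(r^{k}k!)$.

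The matrix-entry statement then drops out: by self-adjointness $m_{xy} = \langle Mx, y\rangle$, and since $U = D^{*}$ we have $\langle U^{k}D^{k}x, y\rangle = \langle D^{k}x, D^{k}y\rangle$. The combinatorial interpretation is immediate from expanding $D^{k}x = \sum_{z \in P_{n-k}} c_{z}\,z$, whose coefficient $c_{z}$ is exactly the number of length-$k$ saturated descending chains from $x$ to $z$; then $\langle D^{k}x, D^{k}y\rangle = \sum_{z} c_{z}c'_{z}$ counts precisely the pairs of such paths sharing a common endpoint. No step here looks substantively hard; the only place to be careful is the telescoping bookkeeping, which I expect to be the main obstacle only in the sense of requiring attention to indices.
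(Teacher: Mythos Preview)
Your proposal is correct and follows essentially the same route as the paper: for (a) you verify idempotence, self-adjointness, and that $M$ fixes $\im(U)$ pointwise, while for (b) you derive the same closed form $(DU_{n-1})^{-1}=\sum_{k\ge 0}(-1)^k U^kD^k/(r^{k+1}(k+1)!)$ via the commutation relation $DU^{k}=U^{k}D+krU^{k-1}$ and telescoping, using $D^{n}|_{\Q P_{n-1}}=0$ to kill the leftover term. The only cosmetic difference is that the paper argues $\im(M)=\im(U)$ from surjectivity of $D$ and $(DU)^{-1}$ rather than by checking $MU=U$ directly.
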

\begin{proof} We have 
\[
M^2=U(DU)^{-1}DU(DU)^{-1}D=M,
\]
so $M$ is a projection.  Since $D$ and $(DU)^{-1}$ are surjective by Proposition \ref{prop:U-injective} ($D$ is the transpose of an injective map), $\im(M_n)=\im(U_{n-1})$, so $M$ is a projection onto $\im(U_{n-1})$.  Orthogonality follows easily from the adjointness of $U,D$, so (a) is proven.  For (b), we first show that 
\begin{align*}
(DU_{n-1})^{-1} = \sum_{k=0}^{n-1} (-1)^k \frac{U^kD^k}{r^{k+1}(k+1)!}.
\end{align*}
Indeed, let $R$ denote the right-hand side, then:
\begin{align*}
DU_{n-1} \cdot R &= \sum_{k=0}^{n-1} (-1)^k \frac{DUU^kD^k}{r^{k+1}(k+1)!} \\
&= \sum_{k=0}^{n-1} (-1)^k \frac{U^{k+1}D^{k+1}+r(k+1)U^kD^k}{r^{k+1}(k+1)!} \\
&= I + (-1)^{n-1} \frac{U^nD^n}{r^nn!} \\
&= I.
\end{align*}
where the second equality follows from applying (\ref{eq:DU-UD}), the third from collapsing the telescoping sum, and the last from observing that $D^n=0$ on $\Q P_{n-1}$.  Now, by definition we have 
\[
M_n = \sum_{k=0}^{n-1} (-1)^k \frac{U^{k+1}D^{k+1}}{r^{k+1}(k+1)!}.
\]
Reindexing and taking matrix entries by applying adjointness, we obtain the desired result.
\end{proof}

\begin{remark}
The formula for $(DU_{n+1})^{-1}$, which plays a key role in the proof of Proposition 2.3, was suggested to us in personal communication by Alexander Miller, who more generally gave a formula for $(UD_{n+1}+kI)^{-1}$ (our formula is the case where $k=r$).  The expression for $m_{xy}$ is also similar to an expression obtained by Miller in \cite{Miller2013} for some of the entries in $(DU+kI)^{-1}$.  Motivated by an earlier representation-theoretic proof of strict rank growth in the case of \textit{differential towers of groups} \cite{Miller2009} (see also \cite{Gaetz2018}), Miller studied the denominators in $(DU+kI)^{-1}$, rather than the eigenvalues of submatrices of $M$, as we do.
\end{remark}

We will always think of $M_n$ as a matrix in the standard basis for $\Q P_n$.  Given $S \subseteq P_n$, let $M_S$ denote the principal submatrix of $M_n$ with rows and columns indexed by the elements of $S$.

\begin{lemma} \label{lem:submatrix}
Let $S \subseteq P_n$ and suppose $M_S$ has no eigenvalue equal to 1, then $\Delta p_n \geq |S|$.
\end{lemma}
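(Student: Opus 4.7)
My plan is to exploit the fact that $M_n$ is an orthogonal projection, so its spectrum is $\{0,1\}$, and to pass to the complementary projection $I - M_n$ where the desired rank gap appears as the rank.

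First, I would observe that since $U_{n-1}$ is injective by Proposition~\ref{prop:U-injective}(a), $\dim \im(U_{n-1}) = p_{n-1}$. Because $M_n$ is the orthogonal projection onto $\im(U_{n-1})$ (Proposition~\ref{prop:M-is-projection}(a)), this means $M_n$ has eigenvalue $1$ with multiplicity exactly $p_{n-1}$ and eigenvalue $0$ with multiplicity $p_n - p_{n-1} = \Delta p_n$. Equivalently, $I - M_n$ is the orthogonal projection onto $\im(U_{n-1})^\perp$, and its rank is precisely $\Delta p_n$.

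Next, I would note that $I - M_S$ is the principal submatrix of $I - M_n$ indexed by $S$. Since any principal submatrix of a matrix $A$ has the form $E_S^T A E_S$ for the $|P_n|\times|S|$ inclusion-of-coordinates matrix $E_S$, it follows that
\[
\rank(I - M_S) \leq \rank(I - M_n) = \Delta p_n.
\]
On the other hand, the hypothesis that $M_S$ has no eigenvalue equal to $1$ means $I - M_S$ is invertible, hence $\rank(I - M_S) = |S|$. Combining these two facts yields $|S| \leq \Delta p_n$, as desired.

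There is really no main obstacle here; the statement is a direct consequence of the projection property established in Proposition~\ref{prop:M-is-projection}(a) together with the trivial rank bound for principal submatrices. The substance of the paper will presumably lie in actually \emph{producing} a set $S$ of the desired size whose corresponding principal submatrix avoids the eigenvalue $1$, which will require the path-counting formula of Proposition~\ref{prop:M-is-projection}(b) to estimate the entries of $M_S$.
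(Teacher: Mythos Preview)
Your argument is correct. It differs slightly from the paper's in flavor: the paper observes directly that if $M_S$ has no eigenvalue equal to $1$ then $\Q S \cap \im(U_{n-1}) = \{0\}$ (since any nonzero $v$ in this intersection would satisfy $M_n v = v$, and hence---because $v$ is supported on $S$---also $M_S v = v$), and then concludes $p_n \geq p_{n-1} + |S|$ by a dimension count. You instead pass to the complementary projection $I - M_n$, note its rank is $\Delta p_n$, and use the trivial bound $\rank(I - M_S) \leq \rank(I - M_n)$ together with the invertibility of $I - M_S$. Both routes are one-line consequences of the projection property in Proposition~\ref{prop:M-is-projection}(a); your rank-inequality formulation is perhaps marginally cleaner in that it sidesteps the small verification that a fixed vector of $M_n$ supported on $S$ really is a fixed vector of the \emph{submatrix} $M_S$, while the paper's geometric phrasing makes the meaning of the hypothesis more transparent. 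Your closing remark about where the real work lies is exactly right.
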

\begin{proof}
If $M_S$ has no eigenvalue equal to 1, then $\Q S$ must intersect $\im(U_{n-1})$ trivially.  Since $U_{n-1}$ is injective, this means that 
\[
\dim(\Q P_n) \geq \dim(\Q P_{n-1}) + \dim(\Q S).
\]
And so $\Delta p_n \geq |S|$.
\end{proof}

\section{Thread elements and rank gaps}
\label{sec:threads}

It seems very difficult to estimate the entries $m_{xy}$ for arbitrary $x,y \in P_n$ using the formula in Proposition \ref{prop:M-is-projection}, since the sum is alternating and individual terms can be much larger than the sum (it follows from the fact that $M^2=M$ that $|m_{xy}| \leq 1$, however individual terms in the sum can be much larger than 1).  We therefore define special elements for which $m_{xy}$ can be more easily analyzed.

An element $x \in P$ is called a \textit{singleton} if $x= \hat{0}$ or $x$ covers a unique element of $P$.  A \textit{thread element} is a singleton $x \in P$ which covers another singleton.  A key property of thread elements is the following:

\begin{prop}[Miller and Reiner \cite{Miller2009}]
\label{prop:threads-extend}
Let $t_0 \in P$ be a thread element.  Then there exist thread elements 
\[
t_0 \lessdot t_1 \lessdot t_2 \lessdot \cdots
\]
We call this infinite sequence a thread.
\end{prop}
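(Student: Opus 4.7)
The plan is to establish, by induction, that every thread element $t \in P$ admits a cover $t' \gtrdot t$ which is again a thread element; iterating this step produces the desired infinite chain $t_0 \lessdot t_1 \lessdot t_2 \lessdot \cdots$.

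For the inductive step, let $t$ be a thread element and let $s$ be the unique element covered by $t$ (so $s$ is itself a singleton). Applying the commutation relation (\ref{eq:DU-UD}) to $t$ gives
\[
DUt \;=\; UDt + rt \;=\; Us + rt,
\]
and I would then compare coefficients of each basis element $z$ on both sides. The coefficient of $z$ on the left counts the number of covers $y \gtrdot t$ with $y \gtrdot z$; on the right it equals $r+1$ if $z = t$, equals $1$ if $z \neq t$ and $z \gtrdot s$, and vanishes otherwise. Reading these off yields three facts: (a) $t$ has exactly $r+1$ covers $y_1,\ldots,y_{r+1}$; (b) each $w \gtrdot s$ with $w \neq t$ is covered by exactly one of the $y_i$; and (c) no $y_i$ covers any element outside $\{t\} \cup \{w : w \gtrdot s\}$.

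The next step is to bound the number of $w \gtrdot s$ with $w \neq t$. Running the same coefficient-comparison on $s$ (using $Ds = s'$ when $s \neq \hat{0}$, or $Ds = 0$ when $s = \hat{0}$) shows that $s$ has exactly $r+1$ covers in the first case and exactly $r$ covers in the second, so in either case there are at most $r$ eligible $w$'s. By (b) each such $w$ accounts for exactly one $y_i$, so by pigeonhole at least one $y_i$---call it $t'$---covers nothing besides $t$. Then $t'$ is a singleton covering the singleton $t$, hence a thread element, completing the induction.

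I do not anticipate a major obstacle: the argument is routine bookkeeping with (\ref{eq:DU-UD}) and the pigeonhole principle. The only mild subtlety is the separate treatment of $s = \hat{0}$, where $s$ has only $r$ covers rather than $r+1$, but this case is strictly easier for the pigeonhole count (indeed it yields at least two thread covers rather than one).
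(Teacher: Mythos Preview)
Your argument is correct. The paper itself does not give a proof of this proposition, instead citing Miller and Reiner \cite{Miller2009}; the coefficient-comparison with (\ref{eq:DU-UD}) followed by the pigeonhole count that you outline is the standard proof, and it is exactly the reasoning the paper carries out (in the special case of rank~$1$) when proving Proposition~\ref{prop:2r-threads}.
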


Given a differential poset $P$, let $\tau_n$ denote the number of thread elements in $P_n$.  Proposition \ref{prop:threads-extend} implies that $\tau$ is a nondecreasing function of $n$.

\begin{ex}
If $P=Y^r$, then the thread elements in ranks $n \geq 2$ are the elements $(\emptyset,...,\emptyset, \lambda, \emptyset,...,\emptyset)$ where $\lambda=(n)$ or $(1^n)$.  Thus $\tau_n=2r$ for $n \geq 2$.

If $P=Z(r)$, we get $r$ thread elements in rank $n$ for each singleton in $P_{n-1}$, and there are $r$ of these singletons for each element of $P_{n-2}$.  Thus $\tau_n=r^2 p_{n-2}$; in particular, $\tau$ grows exponentially in $n$ by (\ref{eq:pn-upper-bound}).
\end{ex}

\begin{theorem} \label{thm:gaps-from-threads}
Let $P$ be an $r$-differential poset.  Then for $N \geq 4n$:
\[
\Delta p_N \geq \tau_n.
\]
\end{theorem}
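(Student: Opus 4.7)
The approach is to apply Lemma~\ref{lem:submatrix} to a set $S \subseteq P_N$ obtained by lifting each thread element of $P_n$ up to rank $N$. For each thread element $t \in P_n$, fix a thread $t = t_0 \lessdot t_1 \lessdot \cdots$ from Proposition~\ref{prop:threads-extend} and set $x_t := t_{N-n} \in P_N$. Since each $x_t$ is a singleton, $D x_t = t_{N-n-1}$, and iterating gives $D^{N-n} x_t = t$; in particular the map $t \mapsto x_t$ is injective, so $S := \{x_t : t \text{ a thread element in } P_n\}$ has cardinality $\tau_n$. By Lemma~\ref{lem:submatrix} it suffices to prove that the symmetric matrix $I - M_S$ is nonsingular, which we establish by showing strict diagonal dominance.

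Set $M := N - n \geq 3n$. Because $x_t$ sits atop a thread of singletons, $D^k x_t = t_{M-k}$ is a single basis element for $0 \leq k \leq M$, and $D^{M+1} x_t = Dt$ is a singleton of $P_{n-1}$. The same injectivity argument shows that distinct threads remain disjoint at all ranks $\geq n$, so for $s \neq t$ in $P_n$,
\[
\langle D^k x_t, D^k x_t \rangle = 1, \qquad \langle D^k x_s, D^k x_t \rangle = 0, \qquad (0 \leq k \leq M).
\]
For $k > M$ we use the bound $\|D^j y\|^2 \leq r^j \cdot m!/(m-j)!$ for any unit $y \in \Q P_m$. This follows from the orthogonal decomposition $\Q P_m = \bigoplus_i U^i \ker D_{m-i}$ together with the identities $\|U^i v\|^2 = i! r^i \|v\|^2$ and $D^j U^i v = (i!/(i-j)!)\, r^j\, U^{i-j} v$ for $v \in \ker D_{m-i}$, all consequences of~\eqref{eq:DU-UD}.

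Substituting into the formula of Proposition~\ref{prop:M-is-projection}(b) and splitting at $k = M$, the diagonal and off-diagonal entries of $M_S$ satisfy
\[
1 - m_{x_t x_t} \geq \sum_{k=0}^M \frac{(-1)^k}{r^k k!} - \frac{2^{n-1}}{r^{M+1}(M+1)!}, \qquad |m_{x_s x_t}| \leq \frac{2^{n-1}}{r^{M+1}(M+1)!},
\]
where the tail estimates come from Cauchy--Schwarz, the displayed inequality, and $\sum_\ell \binom{n-1}{\ell} = 2^{n-1}$ after reindexing by $\ell = k - M - 1 \in [0, n-1]$. The first sum is a partial sum of $e^{-1/r}$ differing from $e^{-1/r}$ by at most $1/(r^{M+1}(M+1)!)$. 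Combined with the Byrnes bound~\eqref{eq:pn-upper-bound} $\tau_n \leq 2\phi_r^n$, the row-sum of off-diagonal entries in $I - M_S$ is at most $(2\phi_r)^n / (r^{M+1}(M+1)!)$. For $N \geq 4n$ we have $M + 1 \geq 3n + 1$, and the super-exponential growth of $(3n+1)!$ dominates $(2\phi_r)^n$ (small $n$ handled by direct check, since $(3n+1)! > 2e(2\phi_r)^n$ already at $n = 1$); strict diagonal dominance follows and Lemma~\ref{lem:submatrix} completes the proof.

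The main obstacle is the quantitative tail analysis: establishing the universal bound $\|D^j y\|^2 \leq r^j m!/(m-j)!$ and then combining it with the Byrnes bound on $p_n$ to beat the constant $e^{-1/r}$ uniformly in $n$. The threshold $N \geq 4n$ is precisely the regime where the factorial $(3n+1)!$ in the denominator overwhelms both the exponential factor $(2\phi_r)^n$ from the number of threads and the $2^{n-1}$ factor counting tail terms.
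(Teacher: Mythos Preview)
Your argument follows the same architecture as the paper's: lift the thread elements of $P_n$ along threads to a set $S\subset P_N$, compute the entries of $M_S$ exactly on the ``thread portion'' $1\le k\le N-n$ of the sum in Proposition~\ref{prop:M-is-projection}(b), bound the remaining tail, and invoke Gershgorin together with Lemma~\ref{lem:submatrix}. The paper bounds the tail via Proposition~\ref{prop:sum-of-squares} (each $e(t)\le \sqrt{r^n n!}$ controls all downward path counts from $t$), whereas you use the spectral inequality $\|D^j y\|^2\le r^j m!/(m-j)!$ coming from the eigendecomposition of $\Q P_m$. Your bound is in fact sharper term-by-term, so either route suffices; the paper's has the advantage of quoting only results already stated in the paper.

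There is, however, a genuine slip in your final numerical check. You assert that ``$(3n+1)! > 2e(2\phi_r)^n$ already at $n=1$,'' but this is false once $r\ge 2$: for $n=1$ the left side is $24$ while $2e\cdot 2\phi_2 = 4e(1+\sqrt 2)\approx 26.3$, and the discrepancy only grows with $r$ since $\phi_r\sim r$. What saves the argument is the factor $r^{M+1}\ge r^{3n+1}$ that you correctly carried in the previous displayed bound but then silently dropped. The inequality you actually need,
\[
r^{3n+1}(3n+1)! \;>\; 2e\,(2\phi_r)^n,
\]
does hold for all $r\ge 1$ and $n\ge 1$ (using $\phi_r<r+1$ one checks $n=1$ directly and then the factorial dominates), so the proof is easily repaired---but as written the verification step is incorrect.
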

\begin{proof}
Assume $n \geq 1$, since $\tau_0=0$.  Let $T \subset P_n$ be the set of thread elements in $P_n$, so $|T|=\tau_n$.  Suppose $N \geq 4n$ and for each element $t \in T$, extend $t$ to a thread via Proposition \ref{prop:threads-extend}, and let $\hat{t}$ be the element of this thread at rank $N$.  It is clear from the definition of threads that $\hat{s} \neq \hat{t}$ for $s \neq t$.  Let $\hat{T}=\{\hat{t}\:| \: t \in T\}$.  We now want to show that $M_{\hat{T}}$ has no eigenvalue equal to 1, and apply Lemma \ref{lem:submatrix}.

Let $s,t \in T$.  We first bound the tail of the sum formula for $m_{\hat{s} \hat{t}}$ given in Proposition \ref{prop:M-is-projection}.  We have
\begin{align}
\left| \sum_{k=N-n+1}^N (-1)^{k-1} \frac{\langle D^k \hat{s}, D^k \hat{t} \rangle}{r^k k!} \right| &= \left| \sum_{k=N-n+1}^N (-1)^{k-1} \frac{\langle D^{k-N+n} s, D^{k-N+n} t \rangle}{r^k k!} \right| \label{eq:using-thread} \\
& \leq \sum_{k=N-n+1}^N \frac{r^n n!}{r^k k!} \label{eq:using-e} \\
& \leq n \cdot \frac{r^n n!}{r^{3n} (3n)!}. \label{eq:a(r,n)}
\end{align}
The equality (\ref{eq:using-thread}) follows because $\hat{s}, \hat{t}$ are thread elements, so $D^{N-n} \hat{s}=s$ and $D^{N-n}\hat{t}=t$.  The inequality (\ref{eq:using-e}) uses Proposition \ref{prop:sum-of-squares}: the number of paths down from $s$ to $\hat{0}$ is at most $\sqrt{r^nn!}$, and thus the number of paths down from $s$ and ending in any particular rank is at most this number; as this also holds for $t$, the bound follows.  Letting $a(r,n)$ denote the expression on the right-hand side of (\ref{eq:a(r,n)}), all off-diagonal entries of $M_{\hat{T}}$ satisfy
\[
|m_{\hat{s} \hat{t}}| \leq a(r,n),
\]
since $\hat{s}$ and $\hat{t}$ lie on separate threads and have no common lower bounds above rank $n$.  The diagonal entries satisfy
\begin{align*}
m_{\hat{t} \hat{t}} &\leq \left( \frac{1}{r}-\frac{1}{2r^2}+ \cdots \pm \frac{1}{r^{N-n}(N-n)!} \right) + a(r,n) \\ 
&< \left(\frac{1}{r}-\frac{1}{2r^2}+\frac{1}{6r^3} \right) + a(r,n) \\
&\leq \frac{2}{3} + a(r,n).
\end{align*}
Let $\phi_r=\frac{1}{2}(r+\sqrt{r^2+4})$.  By Byrnes' result (\ref{eq:pn-upper-bound}), $p_n \leq 2 \phi_r^n$, and so $\hat{T}$ has at most this size.  By Gershgorin's Circle Theorem, in order to see that $M_{\hat{T}}$ has no eigenvalue equal to 1, it suffices to show that
\[
\frac{2}{3} + 2 \phi_r^n a(r,n) \leq 1.
\]
If $r>1$, then $r^{2n} > \phi_r^n$ and $2nn!/(3n)! \leq 1/3$.  If $r=1$, we have $\tau_1=1$, so Theorem \ref{thm:strict-growth} gives the result when $n=1$; for $n\geq 2$ it is easy to see that $2 \phi_1^n a(1,n)<1/3$.
\end{proof}

We now give the proof of Theorem \ref{thm:main-theorem}.

\begin{prop}
\label{prop:2r-threads}
Let $P$ be an $r$-differential poset.  Then at least two thread elements cover each $x \in P_1$.
\end{prop}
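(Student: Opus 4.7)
The plan is to exploit the fact that elements of $P_1$ are themselves automatically singletons (each covers only $\hat{0}$), so an upper cover $y \in P_2$ of $x \in P_1$ is a thread element exactly when $y$ is a singleton (i.e., $x$ is the unique element of $P_1$ covered by $y$). The proof then reduces to a counting argument at the bottom two ranks, using the defining relation \eqref{eq:DU-UD}.

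First I would compute how many upper covers $x$ has. Applying \eqref{eq:DU-UD} at $n=1$ and comparing the coefficient of $x$ on both sides of $D_2 U_1 x - U_0 D_1 x = rx$, the left-hand side contributes (number of upper covers of $x$) $-\, 1$, since $D_1 x = \hat{0}$ and $U_0 \hat{0} = \sum_{w \in P_1} w$. Hence $x$ has exactly $r+1$ upper covers in $P_2$. The same calculation applied to the coefficient of $w \neq x$ in $P_1$ shows that each such $w$ shares exactly one common upper cover in $P_2$ with $x$.

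Now I would argue by double-counting. For each non-singleton upper cover $y$ of $x$, let $k_y \geq 1$ denote the number of elements of $P_1 \setminus \{x\}$ that $y$ covers. Summing the shared-cover count over all $w \in P_1 \setminus \{x\}$ gives
\[
\sum_{y \gtrdot x,\; y\text{ non-singleton}} k_y \;=\; |P_1| - 1 \;=\; r - 1,
\]
where $|P_1| = r$ follows from evaluating \eqref{eq:DU-UD} at $n=0$. Since each $k_y \geq 1$, the number of non-singleton upper covers is at most $r - 1$, leaving at least $(r+1) - (r-1) = 2$ singleton upper covers of $x$. Each such singleton $y \in P_2$ covers the singleton $x \in P_1$, so $y$ is a thread element.

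The only subtlety is to confirm the thread-element condition in full: a thread element must be a singleton that covers another singleton, and I would note explicitly that every $x \in P_1$ trivially covers only $\hat{0}$ and hence qualifies as the required ``another singleton.'' Otherwise the proof is essentially a one-line counting consequence of \eqref{eq:DU-UD} applied at ranks $0$ and $1$, and I do not anticipate any real obstacle.
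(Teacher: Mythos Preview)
Your argument is correct and is essentially the same as the paper's: both use \eqref{eq:DU-UD} at ranks $0$ and $1$ to see that $|P_1|=r$, that each $x\in P_1$ has $r+1$ upper covers, and that any two distinct elements of $P_1$ share at most one (in fact exactly one, as you note) common upper cover, whence at most $r-1$ of the covers of $x$ are non-singletons. Your write-up is slightly more explicit about the double-counting and about why the resulting singletons are thread elements, but the underlying idea is identical.
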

\begin{proof}
In any $r$-differential poset $P$, Equation (\ref{eq:DU-UD}) implies that there are $r$ elements in $P_1$, and all of these are singletons, so they must each be covered by $r+1$ elements.  Given $x \in P_1$, for each $y$ in $P_1$ different from $x$, there is at most one element covering both $x,y$, as can easily be seen from the defining relation (\ref{eq:DU-UD}).  Thus there are at most $r-1$ elements covering $x$ which are not singletons, and so at least two thread elements cover $x$.
\end{proof}

\begin{proof}[Proof of Theorem \ref{thm:main-theorem}]
This proof is similar to the proof of Theorem \ref{thm:gaps-from-threads}, except that we can be more precise, since the threads involved begin in ranks 0 or 1.  We prove the case $r \geq 3$ here, the other cases are nearly identical, except that the different values of $r$ in the denominators imply that larger ranks $n$ must be considered in order for the Gershgorin bound to hold.

Let $r \geq 3, n \geq 2$ and let $T=\{t_1,...,t_r,s_1,...,s_r\} \subset P_2$ be a set of $2r$ thread elements, with $t_i,s_i$ covering each element $x_i$ of $P_1$; such elements exist by Proposition \ref{prop:2r-threads}.  Extend each $t \in T$ to an infinite thread, and let $\hat{T}=\{\hat{t} \: | \: t \in T\}$ be the elements of rank $n$ in these threads.  Then $M_{\hat{T}}$ has diagonal elements:
\[
m_{\hat{t} \hat{t}} = \sum_{k=0}^{n-1} (-1)^k \frac{\langle D^{k+1}\hat{t},D^{k+1}\hat{t}\rangle}{r^{k+1}(k+1)!} = \frac{1}{r} - \frac{1}{2!r^2} + \cdots + (-1)^{n-1}\frac{1}{n!r^n}  < \frac{1}{r} \leq \frac{1}{3}
\]

for some $k$. For off-diagonal entries we have
\[
|m_{\hat{t_i}, \hat{s_i}}| = \left|\frac{1}{r^{n-1}(n-1)!} - \frac{1}{r^n n!}\right| \leq \frac{1}{r^{n-1}(n-1)!} \leq \frac{1}{3},
\]
and all other off-diagonal entries satisfy
\[
|m_{\hat{s} \hat{t}}|=\left| \frac{1}{r^n n!} \right| \leq \frac{1}{2r^2}.
\]
The sum of the absolute values of the matrix entries of $M_{\hat{T}}$ across a row is thus at most $1/3+1/3+(2r-2)/2r^2 < 1$.  Therefore by the Gershgorin Circle Theorem, $M_{\hat{T}}$ has no eigenvalue equal to 1.  Applying Lemma \ref{lem:submatrix} completes the proof.

\end{proof}

\begin{remark} \label{rem:strategy}
One can obtain stronger bounds in an ad-hoc manner using Lemma \ref{lem:submatrix}.  For example, it is possible to show that when $r=1$ we have $\Delta p_n \geq 3$ for $n \geq 6$ by letting $s_n, t_n$ be the elements of rank $n$ in the two threads which begin in rank 2, and letting $u_n$ be a certain sequence of elements which are ``close" to $s_n$ so that the entries of $M_{\{s_n,t_n,u_n\}}$ can be well approximated by exhaustively considering all possible paths.  What is needed to progress further on this problem is identify families $S_n \subset P_n$ whose size grows with $n$ for which the entries of $M_{S_n}$ can be estimated.
\end{remark}

\begin{conj}
In any differential poset $\lim_{n \to \infty} \Delta p_n = \infty$.
\end{conj}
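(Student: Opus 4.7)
\medskip
\noindent\textbf{Proof proposal.} The plan is to strengthen Theorem \ref{thm:gaps-from-threads} in the spirit of Remark \ref{rem:strategy}: construct, for each sufficiently large $n$, a subset $S_n \subset P_n$ with $|S_n| \to \infty$ such that the principal submatrix $M_{S_n}$ has no eigenvalue equal to $1$, whereupon Lemma \ref{lem:submatrix} will yield $\Delta p_n \geq |S_n| \to \infty$.

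Two cases are dispatched quickly. Since $\tau$ is non-decreasing by Proposition \ref{prop:threads-extend}, if $\tau_n$ is unbounded then Theorem \ref{thm:gaps-from-threads} immediately gives $\Delta p_N \geq \tau_{\lfloor N/4\rfloor} \to \infty$. Separately, the weaker statement $\limsup_{n\to\infty} \Delta p_n = \infty$ always holds by a soft counting argument: if $\Delta p_n \leq C$ for all $n \geq N$, then $p_n \leq p_N + C(n-N)$ would grow only linearly, contradicting the Stanley--Zanello lower bound $p_n \gg n^a e^{\sqrt{2rn}}$.

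The heart of the conjecture is therefore the case in which $\tau_n$ stays bounded, where one must upgrade $\limsup = \infty$ to $\lim = \infty$. Following Remark \ref{rem:strategy}, the plan is to enlarge the candidate family from genuine thread elements to \emph{approximate thread elements}: elements $x \in P_n$ whose high downward iterates collapse onto a single thread, say $D^k x$ is a scalar multiple of $t_{n-k}$ for a fixed thread $t_0 \lessdot t_1 \lessdot \cdots$ and all $k$ greater than some threshold $k_0$. For two such elements $x, y$ sitting over distinct threads, the tail $k \geq k_0$ of the sum in Proposition \ref{prop:M-is-projection} then contributes $0$ by orthogonality of distinct thread elements at low rank, while the head is a bounded sum that should be amenable to explicit path-counting analogous to the proof of Theorem \ref{thm:gaps-from-threads}. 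If one can show that for each fixed $k_0$ the number of $k_0$-approximate thread elements in $P_n$ tends to infinity with $n$, a Gershgorin-style argument delivers the required $S_n$.

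The main obstacle, and the reason this is only a plan, is twofold. First, the alternating sum defining $m_{xy}$ involves exponential cancellation, with individual summands able to dwarf the final value; controlling a family of size growing with $n$ requires uniform estimates on $\langle D^k x, D^k y\rangle$ well beyond those used for genuine thread elements. Second, and more fundamentally, in the bounded-$\tau$ regime the structure of general $r$-differential posets is poorly understood -- essentially the only known examples of this type are $Y^r$ and its close relatives -- so even producing a growing supply of approximate thread elements in a hypothetical exotic bounded-$\tau$ poset is not obvious. Proving the conjecture in full generality seems to require new structural input about $r$-differential posets with few threads.
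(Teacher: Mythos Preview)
The statement you are addressing is labelled a \emph{Conjecture} in the paper, and the paper provides no proof of it; it is explicitly left open. So there is no ``paper's own proof'' to compare against, and what you have written is, as you yourself say, a plan rather than a proof.

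Your preliminary reductions are correct and worth recording. The case of unbounded $\tau_n$ does follow immediately from Theorem~\ref{thm:gaps-from-threads}, and the $\limsup$ argument via the Stanley--Zanello lower bound is sound: bounded rank gaps would force $p_n$ to grow at most linearly, contradicting $p_n \gg n^a e^{\sqrt{2rn}}$. These observations isolate the genuine difficulty as the bounded-$\tau$ regime, exactly as you say.

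However, the core of your proposal is not a proof but a restatement of the program already sketched in Remark~\ref{rem:strategy}: find growing families $S_n$ of ``near-thread'' elements for which the entries of $M_{S_n}$ can be controlled well enough to exclude the eigenvalue $1$. You have not supplied the two missing ingredients you yourself name---a construction of arbitrarily many approximate thread elements in a hypothetical bounded-$\tau$ differential poset, and uniform control of the alternating sums $m_{xy}$ for such elements---and nothing in the paper or in your write-up suggests how to obtain either. In particular, your claim that the tail of the sum for $m_{xy}$ vanishes when $x,y$ sit over distinct threads is fine, but the ``head'' terms $\langle D^k x, D^k y\rangle$ for small $k$ are precisely where the exponential cancellation lives, and you offer no mechanism to bound them for a family whose size grows with $n$. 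As it stands, then, the conjecture remains open and your proposal does not close the gap.
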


\section*{Acknowledgements}
The authors wish to thank Fabrizio Zanello for helping to initiate this joint project and Richard Stanley for his helpful conversations.  We are also grateful to Patrick Byrnes for making his computer code available and Alexander Miller for providing useful references.

\bibliographystyle{plain}
\bibliography{arXiv-v3}
\end{document}